\newtheorem{theorem}{Theorem}
\newtheorem{definition}[theorem]{Definition}
\newtheorem{example}[theorem]{Example}
\newtheorem{lemma}[theorem]{Lemma}
\newtheorem{remark}[theorem]{Remark}
\numberwithin{equation}{section}
\begin{document}
\hyphenpenalty=10000

\begin{flushright}
{\footnotesize \textbf{ISSN} 1842-6298 (electronic), 1843-7265 (print)}

{\footnotesize \href{http://www.utgjiu.ro/math/sma/v13/v13.html}{Volume
13~(2018)}, xx -- yy \\[15mm]
}
\end{flushright}

\begin{center}

{\Large \textbf{ON TEA, DONUTS AND NON-COMMUTATIVE GEOMETRY }}\\[5mm]
{\large {Igor ~V. ~Nikolaev  }\\[10mm]
}
\end{center}

\newcommand{\Coh}{\hbox{\bf Coh}}
\newcommand{\Mod}{\hbox{\bf Mod}}
\newcommand{\Tors}{\hbox{\bf Tors}}

{\footnotesize \textbf{Abstract}. 
As many will agree, it feels good to complement a cup of
tea by a donut or two.  This sweet relationship
is also a guiding principle of  non-commutative geometry
known as Serre Theorem. We explain the algebra
behind this theorem and prove that elliptic curves
are complementary to the so-called non-commutative tori.}
\footnote{\textsf{2010 Mathematics Subject Classification:} 14H52; 46L85} 
\footnote{\textsf{Keywords:} Elliptic curve;    Non-commutative torus} 
\afterpage{
\fancyhead{} \fancyfoot{} 
\fancyhead[LE, RO]{\bf\thepage}
\fancyhead[LO]{\small On tea, donuts and non-commutative geometry}
\fancyhead[RE]{\small Igor V. Nikolaev}
\fancyfoot[C]{\small******************************************************************************\\
Surveys~in~Mathematics~and~its~Applications
\href{http://www.utgjiu.ro/math/sma/v13/v13.html}{{\bf 13}~(2018)}, xx -- yy \\
 \href{http://www.utgjiu.ro/math/sma}{http://www.utgjiu.ro/math/sma}}}

\section{Introduction}

\centerline{{\it\quad ``...in divinity opposites are always reconciled.''}\qquad
 ―--- Walter M. Miller Jr.}

\bigskip\noindent
 An algebraic curve $C$  is the set of points on the affine plane whose coordinates are zeros of a 
 polynomial in two variables with   real or complex coefficients, 
 like the one shown in Figure 1. 
\begin{figure}[h]
\begin{picture}(300,100)(-50,0)

\put(100,50){\line(1,0){60}}
\put(130,20){\line(0,1){60}}

\thicklines
\qbezier(165,25)(115,50)(165,75)
\qbezier(110,50)(117,15)(130,50)
\qbezier(110,50)(117,85)(130,50)

\put(110,55){\line(4,1){40}}
\put(150,65){\line(0,-1){30}}

\put(100,55){$a$}
\put(120,70){$b$}
\put(155,33){$a+b$}

\end{picture}
\caption{Affine cubic  $y^2=x(x-1)(x+1)$
with addition law.}
\end{figure}
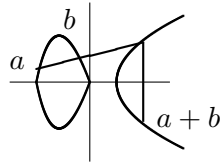

The mathematical theory of algebraic curves emerged from the concept
of an analytic surface created by Georg Friedrich Bernhard Riemann  (1826-1866).
These are called Riemann surfaces and  are nothing but  algebraic
curves  over the field ${\Bbb C}$.  
The  proper  mathematical language --  algebraic geometry -- 
is the result of an inspiration and the hard work of 
Julius Wilhelm Richard Dedekind  (1831-1916), Heinrich Martin Weber (1842-1913),
David Hilbert (1862-1943),  Wolfgang Krull (1899-1971),  Oscar Zariski (1899-1986), 
Alexander Grothendieck (1928-2014) and Jean-Pierre Serre (born 1926)
among others.

The ${\Bbb C}$-valued polynomial functions defined on the curve $C$ can be added and multiplied pointwise. 
This makes the totality of  such functions into a ring, say, $A$. 
Since multiplication of complex numbers is commutative, 
$A$ is a {\it commutative ring}. As an algebraic object, it is \textit{dual} to the geometric object $C$.
In particular, $C$ can recovered up to isomorphism from $A$ and vice versa. In simple terms,
this is the duality between systems of algebraic equations and their solutions.

Instead of the ring $A$ itself,  it is often useful 
 to work with   {\it modules} over $A$;  the latter is a powerful tool of modern 
algebra synonymous   with  representations of   the ring $A$.
Moreover,   according to Maurice Auslander (1926-1994), one should study 
morphisms between modules rather than modules themselves \cite{Aus1}. 
Here  we are talking  about the {\it category of modules},   i.e. the collection of all modules  and all morphisms 
between them.  (In other words,  instead of the individual modules over $A$ we shall study
their ``sociology",  i.e. relationships between modules over $A$.)   
We shall write $\Mod~(A)$ to denote a category of 
 finitely generated graded modules  over $A$ factored by  a category  of
 modules of  finite length.
In  geometric terms,   the r\^ole of  modules  is  played by the so-called {\it sheaves}  on the curve $C$.  
Likewise,   we are looking at the  category  $\Coh~(C)$ of {\it  coherent}  sheaves  on $C$,  
i.e. a class of sheaves having particularly nice  properties related to the geometry of $C$. 
Now  the  duality  between curve   $C$ and algebra  $A$ is  
a special case of  the famous {\it Serre Theorem} \cite{Ser1}
saying that the two categories are equivalent: 
\begin{equation}\label{eq2}
\Coh~(C)\cong \Mod~(A).  
\end{equation}
The careful reader may notice that   $\Mod~(A)$  is well-defined for all
rings -- commutative or {\it not}.  For instance,  consider the non-commutative  
ring $M_2(A)$ of  two-by-two  matrices over  $A$.   It is a trivia to verify  that 
$\Mod~(M_2(A))\cong  \Mod~(A)$ are equivalent categories, yet 
  the ring $M_2(A)$ is not isomorphic to $A$.  
(Notice that $A$ is the center of the ring  $M_2(A)$.)
Rings whose module categories are equivalent are said to be 
{\it Morita equivalent}.

In 1982 Evgeny Konstantinovich Sklyanin (born 1955) was busy with a difficult 
problem of  quantum physics \cite{Skl1},  when he discovered  a remarkable
 non-commutative  ring $S(\alpha,\beta,\gamma)$ 
  with the following  property, see Section 4.  If one calculates the right hand side of (\ref{eq2})
for the $S(\alpha,\beta,\gamma)$, then it will  be equivalent to  the left hand side of (\ref{eq2})  
calculated for an {\it elliptic curve};   by such we  understand a subset ${\cal E}$ of
complex projective plane ${\Bbb C}P^2$ given by the {\it Legendre} cubic: 
\begin{equation}\label{eq3}
 y^2z=x(x-z)(x-\lambda z),\qquad \lambda\in {\Bbb C} - \{0; 1\}. 
\end{equation}
Moreover,  the ring $S(\alpha,\beta,\gamma)$ gives rise to an automorphism  $\sigma: {\cal E}\to {\cal E}$ \cite{Skl1}.
Sklyanin's example hints,  that at least some parts of algebraic geometry can be recast in 
terms of non-commutative algebra;  but what such a generalization is good for?

 Long time ago, Yuri Ivanovich Manin (born 1937)
and his student Andrei Borisovich Verevkin (born 1964) suggested that   in the future  all 
algebraic geometry will be  non-commutative;  the classical case  corresponds to  an  ``abelianization'' 
of  the latter \cite{Ver1}. No strict description of such a process for  rings exists,
but for  groups one can   think of the abelianized fundamental group of a knot or a manifold; 
the latter is still a valuable topological invariant -- the first homology -- yet it cannot 
distinguish between  knots or manifolds nearly as good as the fundamental group does.
This simple observation points to  advantages of non-commutative geometry.

In this note we demonstrate  an equivalence  between the category of elliptic curves and 
the category of  so-called  {\it non-commutative tori};  the latter  is  closely related (but not identical) 
to the  category of Sklyanin  algebras,  see Section 4.     
 We  explore  an application   to the rank problem for elliptic curves,
 see Section 5.

\section{Elliptic curves on breakfast}
An imaginative reader may argue that the affine cubic in Figure 1 reminds
a part of a tea pot;  what will be the donut in this case? 
(The situation is sketched in Figure 2 by  the artistically impaired author.)  
We show in Section 4
that the algebraic dual of ${\cal E}$ (the ``donut'') corresponds to 
a {\it non-commutative torus} to be defined below. But first, let us 
recall some standard facts.  
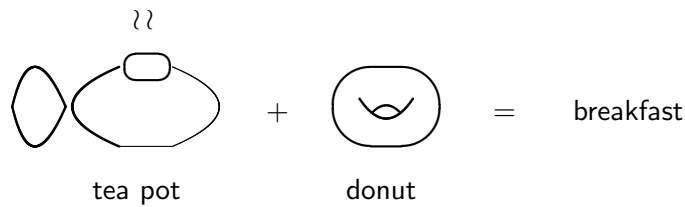
\begin{figure}[h]
\begin{picture}(300,100)(50,0)

\put(150,35){\line(1,0){20}}

\qbezier(170,35)(205,50)(170,65)

\thicklines


\put(250,50){\oval(40,30)}
\qbezier(240,53)(250,38)(260,53)
\qbezier(245,48)(250,53)(255,48)


\qbezier(150,35)(115,50)(150,65)
\qbezier(110,50)(117,20)(130,50)
\qbezier(110,50)(117,80)(130,50)

\put(160.5,65){\oval(17,10)}

\put(205,45){$+$}
\put(290,45){$=$}
\put(320,45){{\sf breakfast}}
\put(140,15){{\sf tea pot}}
\put(235,15){{\sf donut}}

\put(155,80){$\wr$}
\put(160,80){$\wr$}

\end{picture}
\caption{Mathematical breakfast.}
\end{figure}

An {\it elliptic curve}  ${\cal E}$ is a subset  of  
 ${\Bbb C}P^2$   given by equation (\ref{eq3}).  There exist two more equivalent
definitions of ${\cal E}$.

(1)  The curve ${\cal E}$ can be  defined as the  intersection of two quadric surfaces
in   complex projective space  ${\Bbb C}P^3$:
\begin{equation}\label{jacobi}
\left\{
\begin{array}{ccc}
u^2+v^2+w^2+z^2 &=&  0,\\
Av^2+Bw^2+z^2  &=&  0,   
\end{array}
\right.
\end{equation}
where $A$ and $B$ are some complex constants  and  $(u,v,w,z)\in {\Bbb C}P^3$. 
(Unlike the Legendre form (\ref{eq3}), this representation of ${\cal E}$ is {\it not} unique.)
The system of equations (\ref{jacobi}) is called  the {\it Jacobi form} of ${\cal E}$.

(2)  The analytic form of ${\cal E}$  is given by a  {\it complex torus}, i.e.
a surface of genus 1  whose local charts are patched together by a complex analytic map.  
The latter  is simply  the quotient space  ${\Bbb C}/L_{\tau}$, where 
$L_{\tau}:={\Bbb Z}+{\Bbb Z}\tau$ is a lattice and 
$\tau\in {\Bbb H}:=\{z=x+iy\in {\Bbb C} ~|~ y>0\}$ is called
the  complex modulus,  see Figure 3.   
\begin{figure}
\begin{picture}(300,100)(0,0)

\put(90,90){\line(1,0){80}}
\put(80,70){\line(1,0){80}}
\put(60,30){\line(1,0){80}}

\put(85,20){\line(1,2){40}}
\put(65,20){\line(1,2){40}}
\put(105,20){\line(1,2){40}}
\put(125,20){\line(1,2){40}}

\put(180,50){\vector(1,0){40}}

\thicklines
\put(70,50){\line(1,0){80}}
\put(100,20){\line(0,1){80}}

\put(100,50){\vector(1,0){20}}
\put(100,50){\vector(1,2){10}}
\put(110,70){\line(1,0){20}}
\put(120,50){\line(1,2){10}}


\put(290,50){\oval(60,40)}
\qbezier(280,53)(290,38)(300,53)
\qbezier(285,48)(290,53)(295,48)


\put(275,90){${\Bbb C}/L_{\tau}$}
\put(70,90){${\Bbb C}$}
\put(120,35){$1$}
\put(105,75){${\tau}$}

\put(180,55){quotient}
\put(190,40){map}

\end{picture}
\caption{Complex torus  ${\Bbb C}/L_{\tau}$.}
\end{figure}
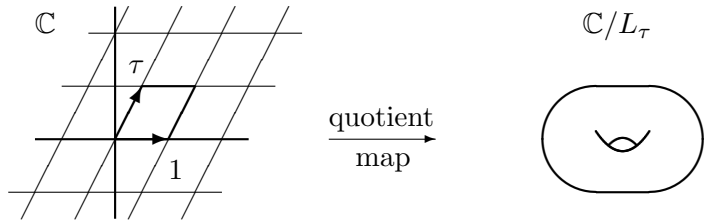

There exists a one-to-one correspondence   ${\cal E}\to {\Bbb C}/L_{\tau}$ 
given  by a meromorphic  $\wp$-function:
\begin{equation}\label{wp} 
\wp(z)={1\over z^2}+\sum_{\omega\in L_{\tau}-\{0\}} \left({1\over (z-\omega)^2}-{1\over\omega^2}\right).
\end{equation}
The lattice $L_{\tau}$ is connected  to the so-called  {\it Weierstrass normal form}  of  ${\cal E}$
in ${\Bbb C}P^2$:
\begin{equation}\label{w}
 y^2z=4x^3-g_2xz^2-g_3z^3,
 \end{equation}
 where $g_2 =  60\sum_{\omega\in  L_{\tau}-\{0\}} {1\over\omega^4}$  and 
$g_3 =  140\sum_{\omega\in L_{\tau}-\{0\}} {1\over\omega^6}$.   
In turn,  the Weierstrass normal form (\ref{w})  is linked to the Legendre cubic  (\ref{eq3})
by the formulas  $g_2={^3\sqrt{4}\over 3}(\lambda^2-\lambda+1)$ and   
$g_3={1\over 27} (\lambda+1)(2\lambda^2-5\lambda+2)$.
Thus,  it is possible to recover   ${\cal E}$  -- up to an isomorphism --  from $L_{\tau}$;  
we shall denote  by ${\cal E}_{\tau}$  the  elliptic curve corresponding  
to the complex torus of modulus $\tau$.  
\begin{remark}
It is easy to see, that each automorphism $\sigma: {\cal E}_{\tau}\to {\cal E}_{\tau}$
is given by a shift of lattice $L_{\tau}$ in the complex plane ${\Bbb C}$ and vice versa;  
thus points $p\in {\Bbb C}/L_{\tau}$
are bijective with the automorphisms of ${\cal E}_{\tau}$.  In particular,  points of finite
order on ${\cal E}_{\tau}$ correspond to  finite order automorphisms of  ${\cal E}_{\tau}$.    
\end{remark}
Notice that $L_{\tau}$ can be written in a new basis $\{\omega_1,\omega_2\}$, 
where $\omega_2=a\tau +b$ and $\omega_1=c\tau+d$ for some $a,b,c,d\in {\Bbb Z}$, 
such that $ad-bc=1$.  Normalization  $\tau:= {\omega_2\over \omega_1}$ of basis   $\{\omega_1,\omega_2\}$ 
to a standard basis $\{1,\tau'\}$ implies   that   ${\cal E}_{\tau}$ and ${\cal E}_{\tau'}$ are isomorphic, 
if and only if:  
\begin{equation}\label{iso}
\tau'={a\tau+b\over c\tau+d} \quad \hbox{for some matrix} 
\quad\left(\begin{matrix} a & b\cr c & d\end{matrix}\right)  \in SL_2({\Bbb Z}).
\end{equation}

An elliptic curve $\mathcal{E}_{\tau}$ is said to have {\it complex multiplication}  (CM) if there is a complex number 
$\alpha \not\in {\Bbb Z}$  such that $\alpha L_{\tau} \subseteq L_{\tau}$. Applying multiplication by $\alpha$ to 
$1 \in L_{\tau}$, we have $\alpha \in L_{\tau}$,  i.e., $\alpha = a + b\tau$ for some $a,b \in {\Bbb Z}$. 
Applying multiplication by $\alpha$ to $\tau \in L_{\tau}$,   we have $(a + b\tau) \tau = c + d\tau$ 
for some $c,d \in {\Bbb Z}$.  Since $\tau$ is not a real number and the ratio $(c +d\tau)(a+b\tau)^{-1}$ 
is uniquely determined, $c + d\tau$ cannot be a real multiple of $a + b\tau$. 
Thus $a + b\tau$ and $c + d\tau$,  viewed as vectors of the real vectors space ${\Bbb C}$, are linearly independent, 
and therefore $ad-bc \neq 0$. Rewriting the above equality as a quadratic equation,
$b\tau^{2} +(a-d)\tau -c =0$, we have  $\tau = \frac{(d-a) \pm \sqrt{-D}}{2b}$,
where $D = -(a-d)^2  -4ac$. Since $\tau$ is not a real number, 
we must have $D > 0$, which tells us that $\tau$ (and hence $\alpha = a + b\tau$) 
belongs to the imaginary quadratic field ${\Bbb Q}(\sqrt{-D})$. 
In particular, not every elliptic curve admits complex multiplication.
Conversely,   if  $\tau\in {\Bbb Q}(\sqrt{-D})$ then $\mathcal{E}_{\tau}$ has CM; 
thus one gets a necessary and sufficient condition for complex multiplication.

No surprise, that  complex multiplication is used to construct   extensions of the imaginary
quadratic  fields with abelian Galois group,  thus  providing  a powerful link between complex analysis
and number theory \cite{ST}.  Perhaps  that is why   David Hilbert counted complex multiplication as not 
only the most beautiful part of mathematics but also of entire science.

\section{A non-commutative donut}
The product of complex numbers $a$ and $b$ is commutative, i.e. $ab=ba$. Such a
property cannot be taken for granted, since more than often $ab\ne ba$.
(For instance, putting  on a sock $a$ then a rollerblade shoe $b$ feels better than the other 
way around.)  Below we consider a non-commutative algebra related to elliptic curves.

A {\it $C^*$-algebra} ${\cal A}$ is an algebra over $\Bbb C$ with a norm 
$a\mapsto ||a||$ and an involution $a\mapsto a^*, a\in {\cal A}$, such that ${\cal A}$ is
complete with  respect to the norm, and such that $||ab||\le ||a||~||b||$ and $||a^*a||=||a||^2$ for every  $a,b\in {\cal A}$.  
We assume that our  algebra ${\cal A}$ has a unit.
Such algebras provide an axiomatic way to describe 
 rings of bounded linear operators acting on a Hilbert space. 
These  rings  appear in the works of John von Neumann (1903-1957) and his former student 
Francis Joseph Murray (1911-1996) on quantum physics. Axioms for $C^*$-algebras
were introduced  by Israel Moiseevich Gelfand (1913-2009). 
The usage of $C^*$-algebras in non-commutative geometry was 
pioneered by Alain Connes (born 1947).

A $C^*$-algebra is called {\it universal} if for any other $C^*$-algebra
 with the same number of generators satisfying the
 same relations, there is a unique unital $C^*$-morphism
 from the former to the latter.  (For $C^*$-algebras, the universal problem does not always
 have a solution, but it does have a solution for a class of  $C^*$-algebras which we introduce below.)
An element  $u\in {\cal A}$  is called  a {\it unitary}, if 
$u^{-1}=u^*$. 
\begin{definition}\label{dfn1}
{\bf (\cite{Rie1})}
By a non-commutative torus one understands the universal $C^*$-algebra 
${\cal A}_{\theta}$ generated by unitaries $u$ and $v$ satisfying 
the commutation relation $vu=e^{2\pi i\theta}uv$ for a real constant $\theta$. 
\end{definition}
\begin{remark}
If one denotes $x_1=u, x_2=u^*, x_3=v, x_4=v^*$ and if $e$ is  the unit, then the 
relations for  ${\cal A}_{\theta}$ take the form: 
\begin{equation}\label{easy}
\left\{
\begin{array}{cc}
x_3x_1  &= e^{2\pi i\theta}x_1x_3,\\
x_1x_2 &= e= x_2x_1,\\
x_3x_4  &= e= x_4x_3,
\end{array}
\right.
\end{equation}
with  the involution  $x_1^*=x_2$ and $x_3^*=x_4$. 
\end{remark}

Different ${\cal A}_{\theta}$ may have equivalent module categories. It was 
shown  by Marc Aristide Rieffel (born 1937) and others, that 
 ${\cal A}_{\theta}$ and ${\cal A}_{\theta'}$ are Morita equivalent
 if and only if: 
\begin{equation}\label{mor}
\theta'={a\theta+b\over c\theta+d} \quad \hbox{for some matrix} 
\quad\left(\begin{matrix} a & b\cr c & d\end{matrix}\right)  \in SL_2({\Bbb Z}).
\end{equation}

Denote by $\Lambda_{\theta}$ a {\it pseudo-lattice},  i.e. the  abelian subgroup ${\Bbb Z}+{\Bbb Z}\theta$ 
of ${\Bbb R}$.    The ${\cal A}_{\theta}$ is said to have {\it real  multiplication} (RM), if there 
exists a real number $\alpha\not\in {\Bbb Z}$  such that $\alpha \Lambda_{\theta}\subseteq \Lambda_{\theta}$.
The above inclusion implies, that $\theta,\alpha\in {\Bbb Q}(\sqrt{D})$ for some integer 
$D>0$.  Indeed, from  $\alpha \Lambda_{\theta}\subseteq \Lambda_{\theta}$  one gets $\alpha=a+b\theta$ and 
$\alpha\theta=c+d\theta$ for some $a,b,c,d\in {\Bbb Z}$,  such that $ad-bc\ne 0$.  
Eliminating $\alpha$ in the above two equations, one obtains a quadratic equation
$b\theta^2+(a-d)\theta-c=0$  with discriminant $D$.  Because $\theta$ is a real root,
one gets $D>0$.   Since   $\alpha=a+b\theta$,  one has  an inclusion $\alpha\in {\Bbb Q}(\sqrt{D})$;
whence the name.

\section{Bon appetit!}
The reader noticed already something unusual:  transformations 
(\ref{iso}) and (\ref{mor}) are given by the same formulas!
 This observation hints at  a possibility of  equivalence of
the corresponding categories as shown  on the diagram below.

\medskip
\begin{picture}(300,110)(-70,-5)
\put(20,70){\vector(0,-1){35}}
\put(130,70){\vector(0,-1){35}}
\put(45,23){\vector(1,0){60}}
\put(45,83){\vector(1,0){60}}
\put(13,20){${\cal A}_{\theta}$}
\put(-10,50){$F$}
\put(150,50){$F$}
\put(123,20){${\cal A}_{\theta'={a\theta+b\over c\theta+d}}$}
\put(17,80){${\cal E}_{\tau}$}
\put(122,80){${\cal E}_{\tau'={a\tau+b\over c\tau+d}}$}

\end{picture}
\begin{example}\label{exm1}
If  $D>1$ is a square-free integer, then  $F$ maps ${\cal E}_{\tau}$ with CM by $\sqrt{-D}$ to 
 ${\cal A}_{\theta}$ with RM  by  $\sqrt{D}$;  in other words,  $F({\cal E}_{\sqrt{-D}})= {\cal A}_{\sqrt{D}}$.
Note however, that no explicit  formula for the  function  $\theta=\theta(\tau)$ exists in general.  
\end{example}

The  {\it category  of elliptic curves}  consists of all ${\cal E}_{\tau}$
and all algebraic (or holomorphic)  maps between ${\cal E}_{\tau}$.
Likewise, the  {\it category of non-commutative tori}  consists of  all
${\cal A}_{\theta}$ and all $C^*$-algebra homomorphisms between 
${\cal A}_{\theta}\otimes {\cal K}$,  where ${\cal K}$
is the $C^*$-algebra of all compact operators on an infinite dimensional separable Hilbert space.  
(It was proved by Marc Aristide Rieffel,  that ${\cal A}_{\theta}\otimes {\cal K}$
and ${\cal A}_{\theta'}\otimes {\cal K}$ are isomorphic  if and only if ${\cal A}_{\theta}$
and ${\cal A}_{\theta'}$ are {\it Morita equivalent}.)
The following result tells us, that indeed we have an equivalence 
of two categories. 
\begin{theorem}\label{thm0}
There exists a covariant functor,  $F$,  from  the category of elliptic curves
to the category of   noncommutative tori,  such that $F({\cal E}_{\tau})$ 
and  $F({\cal E}_{\tau'})$ are isomorphic  if and only if 
 ${\cal E}_{\tau}$ and  ${\cal E}_{\tau'}$ are related by an algebraic  map.
\end{theorem}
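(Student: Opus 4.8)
The plan is to construct the functor $F$ explicitly using the two intermediate descriptions of an elliptic curve recalled in Section~2, and then to translate the two equivalence relations---isomorphism of complex tori via $SL_2(\mathbb{Z})$ in~(\ref{iso}) and Morita equivalence of noncommutative tori via $SL_2(\mathbb{Z})$ in~(\ref{mor})---into one another. First I would fix the object map. To an elliptic curve $\mathcal{E}_{\tau}$ I associate its complex torus $\mathbb{C}/L_{\tau}$, and to the lattice $L_{\tau}=\mathbb{Z}+\mathbb{Z}\tau$ I attach a pseudo-lattice $\Lambda_{\theta}=\mathbb{Z}+\mathbb{Z}\theta\subset\mathbb{R}$ obtained by a suitable ``measured foliation'' or projection construction that sends the two-dimensional lattice to a one-dimensional one. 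The real parameter $\theta=\theta(\tau)$ then determines the noncommutative torus $\mathcal{A}_{\theta}$ of Definition~\ref{dfn1}, and I set $F(\mathcal{E}_{\tau}):=\mathcal{A}_{\theta(\tau)}$.

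The heart of the argument is the equivalence of the two relations. On the elliptic-curve side, $\mathcal{E}_{\tau}$ and $\mathcal{E}_{\tau'}$ are related by an algebraic (isogeny-type) map exactly when $\tau$ and $\tau'$ lie in the same $SL_2(\mathbb{Z})$-orbit under the action $\tau\mapsto\frac{a\tau+b}{c\tau+d}$, by~(\ref{iso}). On the noncommutative-torus side, $\mathcal{A}_{\theta}$ and $\mathcal{A}_{\theta'}$ are Morita equivalent, hence (by Rieffel's theorem quoted just before the statement) the stabilized algebras $\mathcal{A}_{\theta}\otimes\mathcal{K}$ and $\mathcal{A}_{\theta'}\otimes\mathcal{K}$ are isomorphic, exactly when $\theta$ and $\theta'$ lie in the same $SL_2(\mathbb{Z})$-orbit under $\theta\mapsto\frac{a\theta+b}{c\theta+d}$, by~(\ref{mor}). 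Since both actions are governed by the \emph{same} modular group via the \emph{same} fractional-linear formula, I would verify that the assignment $\tau\mapsto\theta$ is equivariant: it intertwines the two $SL_2(\mathbb{Z})$-actions, so that $\tau,\tau'$ are $SL_2(\mathbb{Z})$-equivalent if and only if $\theta(\tau),\theta(\tau')$ are. This equivariance is precisely what makes the stated biconditional $F(\mathcal{E}_{\tau})\cong F(\mathcal{E}_{\tau'})\iff \mathcal{E}_{\tau}\sim\mathcal{E}_{\tau'}$ hold.

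To complete the construction I must check functoriality and covariance: a morphism of elliptic curves should induce a homomorphism $\mathcal{A}_{\theta}\otimes\mathcal{K}\to\mathcal{A}_{\theta'}\otimes\mathcal{K}$ compatibly with composition and identities, and the direction of arrows must be preserved (covariance). Here I would use that morphisms in the category of elliptic curves are generated by isomorphisms together with the algebraic maps realizing the $SL_2(\mathbb{Z})$ relation, and that each such map corresponds, on pseudo-lattices, to an inclusion $\Lambda_{\theta'}\subseteq\Lambda_{\theta}$ (or the associated index data) that is realized by a $C^*$-morphism after tensoring with $\mathcal{K}$.

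The main obstacle is the equivariance of the map $\tau\mapsto\theta(\tau)$. The delicate point is that, as Example~\ref{exm1} already warns, no explicit closed formula for $\theta=\theta(\tau)$ exists in general; the passage from the genuine lattice $L_{\tau}\subset\mathbb{C}$ to the pseudo-lattice $\Lambda_{\theta}\subset\mathbb{R}$ is not a literal isomorphism but an equivalence of a more subtle, K-theoretic or foliation-theoretic nature. So the hard work is to show that this correspondence, despite lacking an elementary formula, nonetheless transports the $SL_2(\mathbb{Z})$-action on $\mathbb{H}$ to the $SL_2(\mathbb{Z})$-action on $\mathbb{R}$ faithfully, and that it sends algebraic maps of elliptic curves to genuine $C^*$-homomorphisms of the stabilized tori rather than to mere abstract orbit coincidences. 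I expect this verification of equivariance and of the morphism-level functoriality to be where essentially all the content of the theorem resides.
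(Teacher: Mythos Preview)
Your approach is genuinely different from the paper's, and it has a real gap that the paper's route avoids.

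The paper does \emph{not} construct $F$ by passing from the lattice $L_\tau$ to a pseudo-lattice $\Lambda_\theta$ via a foliation/projection. Instead it goes through the Sklyanin algebra. Given $\mathcal{E}_\tau$ together with an order-$4$ automorphism $\sigma$, one forms the Sklyanin algebra $S(\alpha,\beta,\gamma)$ whose center encodes $\mathcal{E}_\tau$ in Jacobi form; for $\sigma^4=\mathrm{Id}$ this algebra can be brought (by Feigin--Odesskii) to the skew-symmetric form $S(q)$ with relations~(\ref{rel1}), where $\theta=\mathrm{Arg}(q)$ and $\mu=|q|$. The key computation is Lemma~\ref{lm1}, which shows that the defining relations~(\ref{easy}) of $\mathcal{A}_\theta$ are equivalent to the system~(\ref{rel2}); comparing~(\ref{rel1}) with~(\ref{rel2}) one sees they differ only by the ``scaling of the unit'' relations $x_1x_2=x_3x_4=\mu^{-1}e$, and quotienting by the two-sided ideal $I_\mu$ these generate yields an isomorphism of involutive rings $\mathcal{A}_\theta\cong S(q)/I_\mu$. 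Thus the parameter $\theta$ is \emph{read off} from the Sklyanin data, not produced by an unspecified lattice-to-pseudo-lattice map.

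Your proposal correctly isolates the mechanism behind the biconditional---both sides are governed by the same $SL_2(\mathbb{Z})$-action, via~(\ref{iso}) and~(\ref{mor})---and you are right that equivariance of $\tau\mapsto\theta(\tau)$ is the crux. But you never actually define this map: ``a suitable measured foliation or projection construction'' is a placeholder, and you yourself flag that no closed formula exists. Without a concrete assignment there is nothing whose $SL_2(\mathbb{Z})$-equivariance can be checked, and your morphism-level functoriality (inclusions $\Lambda_{\theta'}\subseteq\Lambda_\theta$ inducing $C^*$-maps) remains a hope rather than a verification. The paper's Sklyanin route sidesteps this by making $\theta$ an explicit algebraic parameter of an algebra canonically attached to $(\mathcal{E}_\tau,\sigma)$, so the object map of $F$ is pinned down before any equivariance is discussed.
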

\begin{proof}
A  {\it Sklyanin algebra}   $S(\alpha,\beta,\gamma)$  
is  a  ${\Bbb C}$-algebra   on   four  generators  
$x_1,\dots,x_4$   and  six  quadratic relations: 
\begin{equation}\label{sklyanin}
\left\{
\begin{array}{ccc}
x_1x_2-x_2x_1 &=& \alpha(x_3x_4+x_4x_3),\\
x_1x_2+x_2x_1 &=& x_3x_4-x_4x_3,\\
x_1x_3-x_3x_1 &=& \beta(x_4x_2+x_2x_4),\\
x_1x_3+x_3x_1 &=& x_4x_2-x_2x_4,\\
x_1x_4-x_4x_1 &=& \gamma(x_2x_3+x_3x_2),\\ 
x_1x_4+x_4x_1 &=& x_2x_3-x_3x_2,
\end{array}
\right.
\end{equation}
where $\alpha+\beta+\gamma+\alpha\beta\gamma=0$, see \cite{SmiSta1}, p. 260. 
It was proved that the  center of  $S(\alpha,\beta,\gamma)$ gives rise to a family of  elliptic curves  ${\cal E}_{\tau}$ 
 in  the  Jacobi form:
\begin{equation}
\left\{
\begin{array}{ccc}
u^2+v^2+w^2+z^2 &=&  0,\\
{1-\alpha\over 1+\beta}v^2+{1+\alpha\over 1-\gamma}w^2+z^2  &=&  0, 
\end{array}
\right.
\end{equation}
and  an automorphism $\sigma: {\cal E}_{\tau}\to {\cal E}_{\tau}$  \cite{SmiSta1}. 
As explained in Section 1,   $\Coh~({\cal E}_{\tau})\cong \Mod~(S(\alpha,\beta,\gamma))$
for each admissible values of $\alpha, \beta$ and $\gamma$ 
even though $S(\alpha,\beta,\gamma)$   is a non-commutative ring.  
\begin{remark}
The algebra $S(\alpha,\beta,\gamma)$  depends on  two variables, say,  $\alpha$
and $\beta$.  It is not hard to see that  $\alpha$ corresponds  to  $\tau$ and $\beta$ corresponds to   
$\sigma$, since the automorphism  $\sigma$ is given by the shift $0\mapsto p$, where $p$ is a point of 
${\cal E}_{\tau}$.
\end{remark}

Let  ${\cal E}_{\tau}$ be {\it any} elliptic curve,  but choose 
$\sigma: {\cal E}_{\tau}\to {\cal E}_{\tau}$ to be  of {\it order 4}.
Following  (\cite{FeOd1}, Remark 1), in this case  system (\ref{sklyanin}) can be brought to 
the {\it skew-symmetric}  form:
\begin{equation}\label{rel1}
\left\{
\begin{array}{cc}
x_3x_1 &= \mu e^{2\pi i\theta}x_1x_3,\\
x_4x_2 &= {1\over \mu} e^{2\pi i\theta}x_2x_4,\\
x_4x_1 &= \mu e^{-2\pi i\theta}x_1x_4,\\
x_3x_2 &= {1\over \mu} e^{-2\pi i\theta}x_2x_3,\\
x_2x_1 &= x_1x_2,\\
x_4x_3 &= x_3x_4,
\end{array}
\right.
\end{equation}
where $\theta=Arg~(q)$ and $\mu=|q|$ for some complex number $q\in {\Bbb C} - \{0\}$.
(In general,  any Sklyanin algebra on $n$ generators can be brought to the skew-symmetric
form  if and only if  $\sigma^n=Id$ \cite{FeOd1}, Remark 1.)  
We shall denote by $S(q)$ the Sklyanin algebra given by relations (\ref{rel1}).

The reader can verify  that relations (\ref{rel1}) are invariant under the  involution $x_1^*=x_2$ 
and $x_3^*=x_4$. This involution turns $S(q)$ into an algebra with involution.

\medskip
To continue our proof,  we need the following result.
\begin{lemma}\label{lm1}
The system of  relations (\ref{easy}) for any  free  ${\Bbb C}$-algebra
on generators $x_1,\dots,x_4$  is equivalent to  the following system  of   relations:
\begin{equation}\label{rel2}
\left\{
\begin{array}{cc}
x_3x_1 &=  e^{2\pi i\theta}x_1x_3,\\
x_4x_2 &=  e^{2\pi i\theta}x_2x_4,\\
x_4x_1 &=  e^{-2\pi i\theta}x_1x_4,\\
x_3x_2 &=   e^{-2\pi i\theta}x_2x_3,\\
x_2x_1 &= x_1x_2=e,\\
x_4x_3 &= x_3x_4=e.
\end{array}
\right.
\end{equation}
\end{lemma}
\begin{proof}
Notice that  the first  and the two last equations  of (\ref{easy})
and (\ref{rel2}) coincide;  we shall proceed stepwise for the rest  of the equations.

Let us prove that relations (\ref{easy}) imply  $x_1x_4=e^{2\pi i\theta}x_4x_1$. 
It follows from $x_1x_2=e$ and $x_3x_4=e$ that $x_1x_2x_3x_4=e$.  Since $x_1x_2=x_2x_1$ we can bring  the last 
equation to  the form  $x_2x_1x_3x_4=e$ and multiply  both sides by   $e^{2\pi i\theta}$;
thus one gets  $x_2(e^{2\pi i\theta}x_1x_3)x_4=e^{2\pi i\theta}$.  
But $e^{2\pi i\theta}x_1x_3=x_3x_1$ and our main equation takes the form $x_2x_3x_1x_4= e^{2\pi i\theta}$. 
Multiplying  both sides on the left  by  $x_1$  we have   $x_1x_2x_3x_1x_4= e^{2\pi i\theta}x_1$;  since $x_1x_2=e$ 
one  has   $x_3x_1x_4= e^{2\pi i\theta}x_1$.  
Again,  one can multiply both sides  on the left   by  
$x_4$ and thus get  $x_4x_3x_1x_4= e^{2\pi i\theta}x_4x_1$; since $x_4x_3=e$,
one gets the required identity  $x_1x_4= e^{2\pi i\theta}x_4x_1$.

The proof of the remaining relations $x_4x_2 =  e^{2\pi i\theta}x_2x_4$
and $x_3x_2 =   e^{-2\pi i\theta}x_2x_3$ is similar and is  left to the reader.  
Lemma \ref{lm1} is proved.
\end{proof}

\bigskip
Returning  to the proof of Theorem \ref{thm0},   we see that  relations (\ref{rel1}) are  equivalent to  (\ref{rel2}) plus 
the following  relations: 
\begin{equation}\label{rel0}
x_1x_2=x_3x_4={1\over\mu}e.
\end{equation}
(The reader is encouraged to verify the equivalence.)   We shall call (\ref{rel0})   the {\it scaling of the unit}  relations
and   denote by  $I_{\mu}$  the two-sided ideal in $S(q)$ generated by such  relations. 
It is easy to see  that  the scaling of the unit relations  are invariant under the  involution $x_1^*=x_2$ 
and $x_3^*=x_4$.   Thus   involution on  algebra $S(q)$  extends  to an involution on the 
quotient algebra   $S(q) / I_{\mu}$.  
In view of the above,  one gets  an isomorphism of  rings  with involution: 
\begin{equation}\label{rings}
{\cal A}_{\theta}\cong S(q) / I_{\mu}.
\end{equation}

\bigskip
Theorem \ref{thm0} is proved.   
\end{proof}

\section{Know your rank}
Why is Theorem \ref{thm0}  useful?  Roughly, it says that instead of the elliptic curves  ${\cal E}_{\tau}$,  one 
can study the corresponding non-commutative tori ${\cal A}_{\theta}$;  in particular,  geometry of 
${\cal E}_{\tau}$  can be recast  in  terms of  ${\cal A}_{\theta}$.   
It is interesting, for instance, to compare isomorphism   invariants of ${\cal E}_{\tau}$ with  invariants 
of the algebra ${\cal A}_{\theta}$.  Below we relate one such invariant,  the rank of ${\cal E}_{\tau}$,
to the so-called  arithmetic complexity of ${\cal A}_{\theta}$.

Recall that if $a, b\in {\cal E}_{\tau}$ are two points, then their sum  is
defined as shown in Figure 1.  (The addition  is simpler on the complex torus 
${\Bbb  C}/L_{\tau}$, where  the sum is just the image of sum of complex numbers  $a,b\in {\Bbb C}$
under projection  ${\Bbb C}\to {\Bbb  C}/L_{\tau}$.) 
Since each point has an additive inverse and there is a zero point, 
the set of all points of ${\cal E}_{\tau}$ has the structure of an {\it abelian group}. 
If ${\cal E}_{\tau}$ is defined over ${\Bbb Q}$ (or a finite extension of ${\Bbb Q}$),
then the Mordell-Weil theorem says that the  subgroup of points with rational coordinates
is  a  {\it finitely generated} abelian group.  The number of generators in this group  is called the {\it rank} 
of ${\cal E}_{\tau}$ and is denoted by $rk({\cal E}_{\tau})$. Little is known about  ranks
in general, except for the famous {\it Birch and Swinnerton-Dyer Conjecture} comparing 
$rk({\cal E}_{\tau})$  to the order at zero of the Hasse-Weil  $L$-function attached to ${\cal E}_{\tau}$.

Let  ${\cal E}_{\tau}$ be an elliptic curve with CM by $\sqrt{-D}$
for some square-free positive integer $D$.
In this case $rk({\cal E}_{\tau})$ is finite,  because ${\cal E}_{\tau}$
can be defined over ${\Bbb Q}$ or a finite extension of ${\Bbb Q}$;
moreover,  the integer $rk({\cal E}_{\tau})$
is an isomorphism  invariant of ${\cal E}_{\tau}$  modulo a  
finite number of twists.  
The ranks of  CM elliptic curves were calculated  by Benedict Hyman Gross (born 1950)
in his doctoral thesis  \cite{G};  they are reproduced in the table below for some primes  
$D\equiv 3 ~mod~ 4$.

Let ${\cal A}_{\theta}$ be a non-commutative
torus with RM by $\sqrt{D}$, see Example \ref{exm1} for motivation. 
By an {\it arithmetic complexity} $c({\cal A}_{\theta})$ of ${\cal A}_{\theta}$
one understands  the number of independent entries 
in the period $(a_1,\dots,a_n)$ of continued fraction of $\sqrt{D}$.
Roughly speaking, the $c({\cal A}_{\theta})$ is equal to  the Krull dimension of an irreducible 
component of the affine variety defined by a diophantine equation obtained from the continued 
fraction of   $\sqrt{D}$;
we refer the interested reader to  \cite[Section 6.2.1]{Nik2} for the details. 
It follows from the standard properties of continued fractions and 
formula (\ref{mor}), that $c({\cal A}_{\theta})$  is 
an isomorphism invariant of the  algebra  ${\cal A}_{\theta}$.
The values of $c({\cal A}_{\theta})$  for primes  
$D\equiv 3 ~mod~ 4$ are  given  in the table below.

\medskip
\begin{tabular}{c|c|c|c}
\hline
&&continued&\\
$D\equiv 3 ~mod ~4$ & $rk ({\cal E}_{\tau})$ & fraction of $\sqrt{D}$ & $c({\cal A}_{\theta})$\\
&&&\\
\hline
$3$ & $1$ & $[1,\overline{1,2}]$ & $2$\\
\hline
$7$ & $0$ & $[2,\overline{1,1,1,4}]$ & $1$\\
\hline
$11$ & $1$ & $[3,\overline{3,6}]$ & $2$\\
\hline
$19$ & $1$ & $[4,\overline{2,1,3,1,2,8}]$ & $2$\\
\hline
$23$ & $0$ & $[4,\overline{1,3,1,8}]$ & $1$\\
\hline
$31$ & $0$ & $[5,\overline{1,1,3,5,3,1,1,10}]$ & $1$\\
\hline
$43$ & $1$ & $[6,\overline{1,1,3,1,5,1,3,1,1,12}]$ & $2$\\
\hline
$47$ & $0$ & $[6,\overline{1,5,1,12}]$ & $1$\\
\hline
$59$ & $1$ & $[7,\overline{1,2,7,2,1,14}]$ & $2$\\
\hline
$67$ & $1$ & $[8,\overline{5,2,1,1,7,1,1,2,5,16}]$ & $2$\\
\hline
$71$ & $0$ & $[8,\overline{2,2,1,7,1,2,2,16}]$ & $1$\\
\hline
$79$ & $0$ & $[8,\overline{1,7,1,16}]$ & $1$\\
\hline
$83$ & $1$ & $[9,\overline{9,18}]$ & $2$\\
\hline
\end{tabular}

\vskip1cm
The reader may notice a correlation  between $rk({\cal E}_{\tau})$ and $c({\cal A}_{\theta})$, 
which can be generalized as follows.   
\begin{theorem}\label{thm00}
{\bf (\cite{Nik2})}
If $D\equiv 3 ~mod ~4$ is a prime number, then
\begin{equation}\label{reconcile}
rk({\cal E}_{\tau})=c({\cal A}_{\theta})-1.
\end{equation}
\end{theorem}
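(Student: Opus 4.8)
The plan is to evaluate the two sides of (\ref{reconcile}) separately against a single arithmetic invariant of the field ${\Bbb Q}(\sqrt{D})$ -- namely the length $\ell$ of the period of the continued fraction of $\sqrt{D}$ -- and then to observe that each side is determined by the residue of $\ell$ modulo $4$. The functor $F$ of Theorem \ref{thm0}, specialized through Example \ref{exm1}, is what licenses us to read an invariant of the CM curve ${\cal E}_{\tau}$ off the RM torus ${\cal A}_{\theta}$: with $\tau=\sqrt{-D}$ and $\theta=\sqrt{D}$, both $rk({\cal E}_{\tau})$ and $c({\cal A}_{\theta})$ become functions of the same $D$, so the claimed identity is a comparison of two descriptions of one piece of data.

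First I would compute the right-hand side. By (\ref{mor}) the Morita class of ${\cal A}_{\theta}$, and hence $c({\cal A}_{\theta})$, is unchanged if we replace $\theta$ by its $SL_2({\Bbb Z})$-representative $\sqrt{D}$. Since $D\equiv 3\pmod 4$ is prime, $-1$ is a non-residue modulo $D$, so $x^2-Dy^2=-1$ is unsolvable and the period of $\sqrt{D}=[a_0,\overline{a_1,\dots,a_{\ell-1},2a_0}]$ has even length $\ell$ with palindromic tail $a_i=a_{\ell-i}$. Feeding the partial quotients as indeterminates into the Pell relation $p_{\ell-1}^2-Dq_{\ell-1}^2=1$ attached to the period matrix and imposing this palindromic symmetry, I would show that the irreducible component of the resulting affine variety through the genuine solution has Krull dimension $2$ when $\ell\equiv 2\pmod 4$ and dimension $1$ when $\ell\equiv 0\pmod 4$; these are exactly the values $c({\cal A}_{\theta})=2$ and $c({\cal A}_{\theta})=1$ recorded in the table.

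Next I would compute the left-hand side, which is pure arithmetic of ${\cal E}_{\tau}$. Because ${\cal E}_{\tau}$ has complex multiplication, the analytic and algebraic ranks agree (Coates--Wiles, Gross--Zagier--Kolyvagin, Rubin), so $rk({\cal E}_{\tau})$ equals the order of vanishing at $s=1$ of the Hasse--Weil $L$-function, whose parity is pinned down by the root number. Using Gross's explicit determination \cite{G} of this root number for the primes $D\equiv 3\pmod 4$, I would show that $rk({\cal E}_{\tau})=1$ precisely when $\ell\equiv 2\pmod 4$ and $rk({\cal E}_{\tau})=0$ when $\ell\equiv 0\pmod 4$. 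Matching this with the previous paragraph yields $rk({\cal E}_{\tau})=c({\cal A}_{\theta})-1$ case by case.

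The genuine obstacle is the bridge in the last paragraph. The period length of $\sqrt{D}$ modulo $4$ is an invariant of the \emph{real} field ${\Bbb Q}(\sqrt{D})$ -- tied to the $2$-part of its narrow class group and to the sign structure of its fundamental unit -- whereas the root number of ${\cal E}_{\tau}$ lives on the \emph{imaginary} field ${\Bbb Q}(\sqrt{-D})$. Reconciling the two requires the reflection (Spiegelung) relations of class field theory between the quadratic fields attached to $\pm D$, which is precisely the arithmetic mechanism realizing the ``opposites reconciled'' theme of the introduction. I expect this reciprocity, rather than the Krull-dimension bookkeeping of the second paragraph or the now-standard BSD input of the third, to carry the real weight of the proof.
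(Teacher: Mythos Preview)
The paper does not actually prove Theorem~\ref{thm00}: it is stated with a bare citation to \cite{Nik2} and illustrated by the table, but no argument is given in the text. There is therefore no ``paper's own proof'' against which to compare your proposal; the comparison you were asked to make is vacuous here.

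That said, a few remarks on your outline itself. Your reduction of both sides to the residue of the period length $\ell$ modulo $4$ is consistent with every line of the table, and your identification of the hard step --- transporting an invariant of ${\Bbb Q}(\sqrt{D})$ across to the root number on the imaginary side --- is honest and accurate. But as written the proposal is a strategy, not a proof. Two places would need real work before this could stand on its own. First, the claim that the Krull dimension of the relevant component is $2$ or $1$ according to $\ell\bmod 4$ is asserted, not derived; the paper's description of $c({\cal A}_{\theta})$ is itself informal (``number of independent entries'', with a pointer to \cite{Nik2}), so you would need to pin down the precise variety and actually compute its dimension, not merely read the answer off the table. Second, the bridge you flag as the ``genuine obstacle'' is indeed one: the reflection relations you invoke control $2$-ranks of class groups, and it is not automatic that they deliver exactly the parity statement you need for Gross's root number. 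You would have to make that passage explicit, and also justify that the rank is never $\ge 2$ for these primes (this is in \cite{G}, but it is an input, not a consequence of the parity argument). Until those two pieces are written out, what you have is a well-aimed plan rather than a proof.
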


Perhaps  (\ref{reconcile}) is  a reconciliation formula for the 
``opposites'' ${\cal E}_{\tau}$ and  ${\cal A}_{\theta}$   meant  by 
the  science fiction writer Walter M. Miller Jr. (1923-1996).  The reader is to judge!

\bigskip
\noindent \textbf{Acknowledgements.} 
I thank Sasha ~Martsinkovsky for an interest, many useful comments and an invitation 
to talk at his  Representation Theory Seminar at the Northeastern University in Boston.  
I am grateful to the referee for careful reading of the manuscript.

{\footnotesize Department of Mathematics and Computer Science}

{\footnotesize St.~John's University, 8000 Utopia Parkway,}

{\footnotesize New York,  NY 11439, United States.}

{\footnotesize e-mail: igor.v.nikolaev@gmail.com}





\begin{thebibliography}{100}


\bibitem{Aus1} 
M.~Auslander,  \textit{A functorial approach to representation theory}, in:  Representations of algebras (Puebla, 1980), 
 pp. 105-179, Lecture Notes in Math. \textbf{944},  Springer,  Berlin-New York, 1982.  
\href{http://www.ams.org/mathscinet-getitem?mr=0672116}{MR0672116}(83m:16027). 
\href{https://zbmath.org/?q=an:0486.16027}{Zbl 0486.16027}.


\bibitem{FeOd1}
B.~L.~Feigin and A.~V.~Odesskii,  \textit{Sklyanin's  elliptic algebras}, 
Functional Anal. Appl. \textbf{23} (1989), 207-214.
\href{http://www.ams.org/mathscinet-getitem?mr=1026987}{MR1026987}(91e:16037). 
\href{https://zbmath.org/?q=an:0713.17009}{Zbl 0713.17009}.

\bibitem{G}
B.~H.~Gross, \textit{Arithmetic on Elliptic Curves with Complex Multiplication},
Lecture Notes Math. \textbf{776} (1980), Springer. 
\href{http://www.ams.org/mathscinet-getitem?mr=0563921}{MR0563921}(81f:10041).
\href{https://zbmath.org/?q=an:0433.14032}{Zbl 0433.14032}.



\bibitem{Nik2}
I.~V.~Nikolaev, \textit{Noncommutative Geometry},  De Gruyter Studies in Mathematics \textbf{66}, De Gruyter, Berlin, 2017.
\href{https://zbmath.org/?q=an:06790762}{Zbl 06790762}.

\bibitem{Rie1}
M.~A.~Rieffel, \textit{Non-commutative tori -- a case study of non-commutative
differentiable manifolds},  Contemp. Math. \textbf{105} (1990), 191-211. 
\href{http://www.ams.org/mathscinet-getitem?mr=1047281}{MR1047281}(91d:58012).
\href{https://zbmath.org/?q=an:0713.46046}{Zbl 0713.46046}.

\bibitem{Ser1}
J.~P.~Serre,  \textit{Fasceaux alg\'ebriques coh\'erents}, Ann. of Math. \textbf{61} (1955),  197-278. 
\href{http://www.ams.org/mathscinet-getitem?mr=0068874}{MR0068874}(16,953c).
\href{https://zbmath.org/?q=an:0067.16201}{Zbl 0067.16201}.

\bibitem{ST}
J.~H.~Silverman and J.~Tate, \textit{Rational Points on Elliptic Curves}, 
Undergraduate Texts in Mathematics,  Springer-Verlag, New York, 1992. 
\href{http://www.ams.org/mathscinet-getitem?mr=1171452}{MR1171452}(93g:11003).
\href{https://zbmath.org/?q=an:0752.14034}{Zbl 0752.14034}.

\bibitem{Skl1}
E.~K.~Sklyanin, \textit{Some algebraic structures connected to the Yang--Baxter
equations}, Functional Anal. Appl. \textbf{16} (1982), 27-34. 
\href{http://www.ams.org/mathscinet-getitem?mr=0684124}{MR0684124}(84c:82004).
\href{https://zbmath.org/?q=an:0513.58028}{Zbl 0513.58028}.



\bibitem{SmiSta1}
S.~P.~Smith and J.~T.~Stafford, \textit{Regularity of the four dimensional Sklyanin
algebra},  Compositio Math. \textbf{83} (1992), 259-289.
\href{http://www.ams.org/mathscinet-getitem?mr=1175941}{MR1175941}(93h:16037).
\href{https://zbmath.org/?q=an:0758.16001}{Zbl 0758.16001}.

\bibitem{Ver1}
A.~B.~Verevkin,  \textit{On a noncommutative analogue of the category of coherent sheaves on a projective scheme},
in  Algebra and analysis (Tomsk, 1989),  41-53;  Amer. Math. Soc. Transl. Ser. 2,  \textbf{151}, 
Amer. Math. Soc., Providence, RI, 1992
\href{http://www.ams.org/mathscinet-getitem?mr=1191171}{MR1191171}(93j:14002).
\href{https://zbmath.org/?q=an:0920.14001}{Zbl 0920.14001}.


\end{thebibliography}
\end{document}